\newtheorem{lemma}{Lemma}[section]
\newtheorem{thm}[lemma]{Theorem}
\newtheorem{prop}[lemma]{Proposition}
\newtheorem{cor}[lemma]{Corollary}
\theoremstyle{definition}
\newtheorem{defn}[lemma]{Definition}
\newtheorem{quest}[lemma]{Question}
\theoremstyle{definition}
\newtheorem{rmk}[lemma]{Remark}
\newcommand{\set}[1]{\left\{#1\right\}}
\newcommand{\setcon}[2]{\left\{#1\ \left|\ #2\right.\right\}}
\newcommand{\abs}[1]{\left\lvert#1\right\rvert}
\newcommand{\fpres}[2]{\left\langle #1 \left| #2 \right.\right\rangle}
\newcommand{\N}{\ensuremath {\mathbb{N}}}
\newcommand{\R} {\ensuremath {\mathbb{R}}}
\newcommand{\Z} {\ensuremath {\mathbb{Z}}}
\begin{document}

\title{Groups with no coarse embeddings into hyperbolic groups}

\author{David Hume}
\address{Mathematical Institute, University of Oxford, Woodstock Road, Oxford OX2 6GG, UK}
\email{david.hume@maths.ox.ac.uk}

\author{Alessandro Sisto}
\address{Department of Mathematics, ETH Zurich, 8092 Zurich, Switzerland}
\email{sisto@math.ethz.ch}

\keywords{Hyperbolic group, subgroups, coarse embeddings, divergence}

\begin{abstract} We introduce an obstruction to the existence of a coarse embedding of a given group or space into a hyperbolic group, or more generally into a hyperbolic graph of bounded degree. The condition we consider is ``admitting exponentially many fat bigons'', and it is preserved by a coarse embedding between graphs with bounded degree. Groups with exponential growth and linear divergence (such as direct products of two groups one of which has exponential growth, solvable groups that are not virtually nilpotent, and uniform higher-rank lattices) have this property and hyperbolic graphs do not, so the former cannot be coarsely embedded into the latter. Other examples include certain lacunary hyperbolic and certain small cancellation groups.
\end{abstract}

\maketitle

\section{Introduction}

Hyperbolic groups have been at the heart of geometric group theory since Gromov's seminal paper \cite{Gr87} and are still of vital importance to the present day. They are among the best understood classes of groups with a large, diverse and ever--expanding literature. Despite this it is not at all well understood which finitely generated groups may appear as subgroups of hyperbolic groups. One algebraic obstruction is admitting a Baumslag--Solitar subgroup $BS(m,n)=\fpres{a,b}{b^{-1}a^mba^{-n}}$ with $\abs{m},\abs{n}\geq 1$. The goal of this paper is to consider a more geometric obstruction. To do this, we consider every finitely generated group as a Cayley graph with respect to some finite symmetric generating set, and consider every graph as a metric space with the shortest path metric.

In the same way that one may view the existence of a quasi--isometry $q:H\to G$ between finitely generated groups as the natural geometric generalization of the algebraic statement ``$H$ and $G$ are (abstractly) commensurable'', we will consider the existence of a coarse embedding $\phi:H\to G$ as the comparable generalization of the statement ``$H$ is virtually isomorphic to a subgroup of $G$''. In both cases the algebraic statement is known to be stronger than the geometric one: all Baumslag--Solitar groups $BS(m,n)$ with $1<\abs{m}<\abs{n}$ are quasi--isometric, but, for example, $BS(2,p)$ and $BS(2,q)$ are not commensurable whenever $p,q$ are distinct odd primes \cite{Whyte01}; while $\Z^2$ is never a subgroup of a hyperbolic group, but $\R^2$ coarsely embeds into real hyperbolic $3$--space (as a horosphere) and hence into the fundamental group of any closed hyperbolic $3$--manifold. A much more remarkable statement can be found in \cite{AouDurTay}: any infinite order element of a finitely generated subgroup $H$ of a finitely generated hyperbolic group $G$ has stable orbits in any Cayley graph of $H$.

Coarse embeddings of groups into other spaces (particularly certain Banach spaces) are also highly sought, since groups admitting such an embedding satisfy the Novikov and coarse Baum--Connes conjectures \cite{Yu00,KY06}.

There are few invariants which can provide a general geometric obstruction to a coarse embedding, of which the most commonly studied are growth and asymptotic dimension. More recently constructed obstructions include separation and Poincar\'{e} profiles, as well as properties of $L^p$-cohomology related to sphere packings \cite{BenSchTim,HumeMackTess,Pan16}.

Our main result is as follows:

\begin{thm}\label{thm:main}
 Let $G$ be a group admitting exponentially many fat bigons. Then $G$ does not coarsely embed into any hyperbolic graph of bounded degree.
\end{thm}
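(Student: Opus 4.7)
The plan is to prove Theorem \ref{thm:main} by contradiction, following the two-pronged strategy the abstract telegraphs. Assume for contradiction that $G$ admits a coarse embedding $\phi:G\to X$ into a bounded-degree hyperbolic graph $X$. The argument then reduces to two independent statements that I expect to be established as separate propositions in the paper: \textbf{(A)} admitting exponentially many fat bigons is inherited under a coarse embedding between bounded-degree graphs, so $X$ would also admit exponentially many fat bigons; and \textbf{(B)} no bounded-degree hyperbolic graph admits exponentially many fat bigons. Together (A) and (B) produce the contradiction.

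For (A), I would start with a fat bigon in $G$ based at a pair $(x,y)$: two paths from $x$ to $y$ whose midpoints lie far apart (relative to the length). Pushing the paths forward by $\phi$ and connecting consecutive images by shortest $X$-paths produces two $X$-paths from $\phi(x)$ to $\phi(y)$ whose midpoints remain at large distance, by the coarse-Lipschitz lower bound of $\phi$. The bounded-degree assumption on $G$ and $X$ is used twice: first to guarantee that $\phi$ has uniformly finite preimages, so that an exponential count of bigon-witnessing pairs in $G$ yields an exponential count in $X$ (at possibly degraded constants and scales); second to interpret "exponentially many" relative to ball sizes in $X$ itself.

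For (B), the key classical input is the thinness of quasi-geodesic bigons in $\delta$-hyperbolic graphs: any two $(K,C)$-quasi-geodesic paths with common endpoints lie within Hausdorff distance $D=D(\delta,K,C)$. Consequently, the midpoints of any two paths joining a fixed pair of points differ by at most $2D$, so once the required fatness threshold exceeds $2D$ no fat bigons exist at all, much less exponentially many pairs admitting them.

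The main obstacle is step (A), which requires careful bookkeeping on how the fatness parameter, the length scale, and the exponential base transform under $\phi$: bigon-witnessing pairs in $G$ whose $\phi$-images collapse into a small region of $X$ can fail to witness bigons downstream, and the only tool preventing an exponential loss is the bounded-degree hypothesis on $G$, which forces the fibres of $\phi$ over bounded sets to be uniformly bounded. Step (B) is a direct appeal to the Morse lemma / thin-bigon property and I expect it to be brief.
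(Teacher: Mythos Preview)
Your two-lemma decomposition is exactly the paper's: (A) is Lemma~\ref{lem:embed_bigons} and (B) is the lemma immediately following it, and your outline of (A) --- push forward the paths, use bounded degree on both sides to control fibre sizes and to show that not too many bigon-tips can collapse into a small ball of $X$ --- matches the paper's argument closely.

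The gap is in (B). In the paper's definition the sides $\alpha_i$ of a fat bigon are only required to satisfy $l(\alpha_i)\le L\,d(x_0,x)$; they are \emph{not} assumed to be $(K,C)$-quasi-geodesics, and an $L$-efficient path can stray arbitrarily far from the geodesic even in a tree (run up a side branch for $n$ steps and back before proceeding). So the Morse lemma does not apply. Nor can you repair this by strengthening the definition to demand quasi-geodesic sides, because that would break (A): a coarse embedding turns an $L$-efficient path into a (roughly) $KL\epsilon^{-1}$-efficient path after the growth argument you describe, but it does \emph{not} send quasi-geodesics to quasi-geodesics. The paper's proof of (B) is accordingly different and more delicate: one places many disjoint balls of radius $s_0$ along the geodesic $[x_0,x]$; since the two sides of the bigon are $2s_0$-separated away from the endpoints, each ball meets at most one side, so one side avoids at least half the balls; then exponential divergence of geodesics (via \cite[Proposition~III.H.1.6]{BH99}) forces a path avoiding $k$ such balls to have length at least $k(1+\epsilon)^{s_0}$, which for $s_0$ large relative to $L$ violates $l(\alpha_i)\le L\,d(x_0,x)$. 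The input you need is this cost-of-avoidance estimate, not thin-bigon fellow-travelling.
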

Since groups which are hyperbolic relative to virtually nilpotent subgroups coarsely embed into hyperbolic graphs of bounded degree \cite{DahYam}, we can also deduce that no group admitting exponentially many fat bigons is a subgroup of such a relatively hyperbolic group.

The exact definition of admitting exponentially many fat bigons is given in \S\ref{sec:fatbigons}. Here we will just focus on examples, our main source of which is the following proposition. 

\begin{prop}\label{prop:lindiv} Any finitely generated group with exponential growth and linear divergence admits exponentially many fat bigons.
\end{prop}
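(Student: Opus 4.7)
The plan is to combine the two hypotheses in the most direct way: exponential growth supplies exponentially many candidate endpoints at each scale, and linear divergence supplies a single fat bigon based at the identity for each such endpoint.

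First I would use exponential growth to extract an exponentially large collection of well-separated elements at a fixed scale. Since $\abs{B(1,n)} \geq e^{cn}$ for some $c>0$, a pigeonhole argument produces a radius $k \in [n/2, n]$ with $\abs{S(1,k)} \geq e^{c'n}$; alternatively, if the definition of fat bigons in \S\ref{sec:fatbigons} demands that the endpoints themselves be pairwise distant, one can instead take an $\epsilon n$-separated subset of $B(1,n)$, whose cardinality remains $\geq e^{c''n}$ for $\epsilon$ small enough. Call the resulting set of elements $\{y_i\}$.

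Next, for each $y_i$ I would fix a geodesic $\gamma_i$ from $1$ to $y_i$ and set $m_i$ to be its midpoint, so that $1$ and $y_i$ both lie at distance $\asymp k/2$ from $m_i$. Since $G$ has linear divergence, there exist constants $\delta, C>0$, independent of $y_i$, such that one can join $1$ to $y_i$ by a second path $\gamma_i'$ of length at most $Ck$ avoiding the ball $B(m_i, \delta k)$. The pair $(\gamma_i, \gamma_i')$ is then a bigon with both sides of length linear in $n$, and the point $m_i \in \gamma_i$ is at distance $\geq \delta k$ from every point of $\gamma_i'$; this is the ``fatness'' condition. Since the endpoints $y_i$ are distinct (and, if needed, well-separated), the resulting bigons are pairwise distinct, yielding the required $e^{c''n}$ fat bigons at scale $n$.

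The construction itself is essentially forced once the two hypotheses are in hand, so the only real obstacle is bookkeeping: one must ensure that the parameters produced (length of the bigon, width of the fat region, separation of endpoints) match the quantitative definition of ``admitting exponentially many fat bigons'' given in \S\ref{sec:fatbigons}, uniformly in $y_i$. A secondary technicality is that several inequivalent notions of divergence appear in the literature; here one should use whichever version produces a detour path of length $O(k)$ avoiding a ball of radius proportional to $k$ around a basepoint at distance $O(k)$ from both endpoints, which is precisely what linearity of divergence affords.
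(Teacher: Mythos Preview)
Your construction does not produce a fat bigon in the sense of the definition in \S\ref{sec:fatbigons}. Recall that an $(L,s,C)$--bigon at $x$ requires $d(\alpha_1-B,\alpha_2-B)>s$ for $B=N_C(\{1,x\})$: \emph{every} point of $\alpha_1$ outside $B$ must be at distance more than $s$ from \emph{every} point of $\alpha_2$ outside $B$. Your detour $\gamma_i'$ only avoids a ball around the midpoint $m_i$, so you have shown that $m_i$ is far from $\gamma_i'$, but nothing prevents $\gamma_i'$ from running close to $\gamma_i$ at, say, the quarter-point. Indeed a divergence detour based at $m_i$ typically leaves $1$, wanders around the ball $B(m_i,\delta k)$, and rejoins $y_i$; along the way it may well fellow-travel the geodesic $\gamma_i$ on either side of that ball. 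So ``the point $m_i\in\gamma_i$ is at distance $\geq\delta k$ from every point of $\gamma_i'$'' is \emph{not} the fatness condition, and the argument as written has a genuine gap.

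The paper repairs exactly this issue (Proposition~\ref{prop:div}). Rather than detouring around a single ball, one first sends geodesic rays $\beta,\beta'$ out of $1$ and $x$ that move away from the geodesic $[1,x]$ (using that a Cayley graph has a bi-infinite geodesic through every vertex), travels along these rays to points $y,y'$ at distance $\asymp 10d(1,x)$, and \emph{then} applies the linear divergence bound to connect $y$ to $y'$ by a path avoiding a ball around $1$ large enough to contain all of $N_s([1,x])$. The resulting $\alpha_2$ stays $>s$ away from the whole geodesic $\alpha_1=[1,x]$ except within $2s$ of the endpoints, which is what the definition demands. Your counting step (exponentially many endpoints in an annulus) is fine and matches the paper's Proposition~\ref{prop:linseqdiv}; only the bigon construction needs to be replaced.
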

We recall the definition of divergence in Section \ref{sec:examples}. Examples of groups with linear divergence include direct products of infinite groups, groups with infinite center, groups satisfying a law (e.g., solvable groups) that are not virtually cyclic \cite{DS05}, all uniform \cite{KL97} and many non-uniform \cite{DMS10, CDG10, LB15} higher-rank lattices. Since the lamplighter $\Z_2\wr\Z$ admits exponentially many fat bigons, the same is true for every wreath product $H\wr G$ where $H$ is non--trivial and $G$ is infinite, since $\Z_2\wr\Z$ quasi--isometrically embeds into all such groups.
For an infinite finitely generated group, linear divergence is equivalent to the statement that no asymptotic cone contains a cut--point by results in \cite{DMS10}, see Remark \ref{rem:fixing_DMS}.

Proposition \ref{prop:lindiv} has a number of interesting consequences.

\begin{cor}\label{cor:vsolv}
 Let $G$ be a virtually solvable finitely generated group. Then $G$ coarsely embeds in some hyperbolic group if and only if $G$ is virtually nilpotent.
\end{cor}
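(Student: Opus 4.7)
My plan is to split the biconditional into its two directions. The forward direction is the interesting one and relies on Theorem \ref{thm:main} together with Proposition \ref{prop:lindiv}; the reverse direction is a matter of assembling classical embedding results.

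For the ``only if'' direction I argue by contrapositive: suppose $G$ is virtually solvable but not virtually nilpotent, and show that the hypotheses of Proposition \ref{prop:lindiv} hold, so that Theorem \ref{thm:main} applies. Exponential growth is immediate from the Milnor--Wolf theorem together with Gromov's polynomial growth theorem: a virtually solvable group is virtually nilpotent if and only if it has polynomial growth, so $G$ has exponential growth. For linear divergence I use the Drutu--Sapir result cited in the paper — any group satisfying a nontrivial law that is not virtually cyclic has linear divergence — applied with the law of solvability, noting that $G$ is not virtually cyclic since virtually cyclic groups are virtually nilpotent. Proposition \ref{prop:lindiv} now yields that $G$ admits exponentially many fat bigons, and Theorem \ref{thm:main} rules out any coarse embedding of $G$ into a hyperbolic graph of bounded degree; in particular, $G$ does not coarsely embed into any hyperbolic group, since a hyperbolic group's Cayley graph is such a graph.

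For the ``if'' direction I construct a coarse embedding for virtually nilpotent $G$. By Gromov's polynomial growth theorem, $G$ has polynomial growth, so any Cayley graph of $G$ is doubling. Assouad's snowflake embedding theorem then produces a map $f \colon G \to \R^n$ and an exponent $\alpha \in (0,1)$ with $d_G(x,y)^\alpha \asymp \|f(x) - f(y)\|$; since both $t \mapsto t^\alpha$ and $t \mapsto t^{1/\alpha}$ are proper, this is a coarse embedding. Composing with the standard horospheric coarse embedding $\R^n \hookrightarrow \mathbb{H}^{n+1}$ and then with a quasi-isometry from $\mathbb{H}^{n+1}$ to the Cayley graph of any cocompact lattice $\Gamma \leq \mathrm{Isom}(\mathbb{H}^{n+1})$ — such a $\Gamma$ exists and is word-hyperbolic — yields a coarse embedding of $G$ into a hyperbolic group.

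There is no serious obstacle beyond the main theorem of the paper: once Theorem \ref{thm:main} and Proposition \ref{prop:lindiv} are available, the forward direction is a routine application of Milnor--Wolf plus Drutu--Sapir, and the reverse direction is a standard chain of known embeddings. The one mildly subtle check is that a snowflake is a coarse embedding, which holds precisely because the distortion function $t \mapsto t^\alpha$ is proper; and one must remember to pass from a cocompact lattice back to the symmetric space by the \v{S}varc--Milnor lemma, which is routine.
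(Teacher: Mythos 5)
Your proof is correct and follows essentially the same route as the paper: Assouad's embedding theorem plus the horosphere embedding of $\R^n$ into $\mathbb H^{n+1}$ for the ``if'' direction, and Milnor--Wolf for exponential growth together with the Dru\c{t}u--Sapir law/cut-point result for linear divergence, feeding into Proposition \ref{prop:lindiv} and Theorem \ref{thm:main}, for the ``only if'' direction. (Gromov's polynomial growth theorem is not actually needed, since Milnor--Wolf already gives the polynomial/exponential dichotomy for virtually solvable groups, but this is harmless.)
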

\begin{proof}
 By Assouad's Theorem \cite{Ass82}, every virtually nilpotent group can be coarsely embedded into some $\mathbb R^n$, and $\mathbb R^n$ embeds into $\mathbb H^{n+1}$ (as a horosphere) and hence into some hyperbolic group.
 
 If $G$ is not virtually nilpotent, then it has exponential growth \cite{Miln68, Wolf68}. Also, it has linear divergence (since its asymptotic cones do not have cut--points by \cite[Corollary $6.9$]{DS05}) and hence it cannot embed into any hyperbolic group by Proposition \ref{prop:lindiv} and Theorem \ref{thm:main}.
\end{proof}

\begin{cor} Let $m,n\in\Z$ with $\abs{m}\leq\abs{n}$. Then $BS(m,n)$ coarsely embeds into a hyperbolic group if and only if $m=0$ or $\abs{n}\leq 1$.
\end{cor}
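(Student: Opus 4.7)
My plan is to handle the two implications separately. The forward direction is a direct classification of the resulting groups, while the backward direction proceeds by cases on $|m|$ and leverages Corollary~\ref{cor:vsolv} and Proposition~\ref{prop:lindiv} combined with Theorem~\ref{thm:main}.

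For the forward direction, I would note that under $|m|\leq|n|$ the hypothesis ``$m=0$ or $|n|\leq 1$'' restricts $BS(m,n)$ to be either virtually free (namely $F_2$ if $m=n=0$, or $\Z\ast(\Z/n\Z)$ if $m=0$ and $|n|\geq 1$, which specializes to $\Z$ when $|n|=1$) or virtually $\Z^2$ (namely $\Z^2$ itself when $(m,n)=(\pm 1,\pm 1)$ with matching signs, or the Klein bottle group when the signs differ). Virtually free groups are hyperbolic; and $\Z^2$ coarsely embeds as a horosphere in $\mathbb{H}^3$ and hence into the fundamental group of any closed hyperbolic $3$-manifold, as noted in the introduction. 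This gives the required coarse embedding in each case.

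For the backward direction, assume $m\neq 0$ and $|n|\geq 2$ and argue by cases on $|m|$. When $|m|=1$, the group $BS(1,n)\cong\Z[1/n]\rtimes\Z$ is metabelian and has exponential growth for $|n|\geq 2$, hence is not virtually nilpotent, so Corollary~\ref{cor:vsolv} immediately rules out a coarse embedding. When $|m|=|n|\geq 2$, consider $BS(n,n)$ first: the element $a^n$ is central by the defining relation $b^{-1}a^n b=a^n$, so $BS(n,n)$ has infinite center and hence linear divergence by \cite{DS05}. Its subgroup $\langle a,bab^{-1}\rangle$ is free of rank $2$ (a classical fact whenever $|m|,|n|\geq 2$), giving exponential growth. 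Proposition~\ref{prop:lindiv} and Theorem~\ref{thm:main} then preclude a coarse embedding. For $BS(n,-n)$, the conclusion follows because $\langle a,b^2\rangle\cong BS(n,n)$ is an index-$2$ subgroup and the obstruction passes to finite-index subgroups.

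The principal obstacle will be the range $2\leq|m|<|n|$, where $BS(m,n)$ is non-solvable, has trivial center, and is known to have exponential divergence, so Proposition~\ref{prop:lindiv} cannot be applied directly. When $m$ divides $n$ the subgroup $\langle a^m,b\rangle\cong BS(1,n/m)$ (with $|n/m|\geq 2$) provides a reduction to the $|m|=1$ case, provided one verifies it is undistorted in $BS(m,n)$; with $d=\gcd(m,n)$, a similar argument via the subgroup $\langle a^d,b\rangle$ (an image of $BS(m/d,n/d)$) reduces the general case to one in which $\gcd(m,n)=1$. The genuinely new cases are thus those with $\gcd(m,n)=1$ and $2\leq|m|<|n|$. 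There I expect to need a direct verification that $BS(m,n)$ admits exponentially many fat bigons, using the recursive relations $b^{-k}a^{m^k}b^k=a^{n^k}$ to produce short ``rectangle'' loops and combining them with the exponential-growth free subgroup $\langle a,bab^{-1}\rangle$ to generate exponentially many pairs of near-geodesic paths with shared endpoints that stay far apart in the middle. Controlling the fatness of these pairs in the Cayley graph --- precisely verifying the condition from \S\ref{sec:fatbigons} --- is the main technical hurdle I foresee.
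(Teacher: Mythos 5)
Your treatment of the cases $m=0$ or $|n|\leq 1$ (coarse embeddability), $|m|=1<|n|$ (via Corollary \ref{cor:vsolv}), and $1<|m|=|n|$ (infinite centre gives linear divergence, a free subgroup gives exponential growth, and the index-two passage handles the sign) is correct and matches the paper in spirit; the paper instead cites Whyte for a finite-index $\Z\times F_n$ subgroup in the last of these cases, but your variant is fine. However, there is a genuine gap in the range $1<|m|<|n|$: you explicitly leave the subcase $\gcd(m,n)=1$ (e.g.\ $BS(2,3)$) unresolved, offering only a speculative plan to verify exponentially many fat bigons directly from the relations $b^{-k}a^{m^k}b^k=a^{n^k}$. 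That is not a proof, and it is the hardest part of the statement; as you note, these groups have exponential divergence, so Proposition \ref{prop:lindiv} gives no help and the verification would have to be done from scratch.

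The missing ingredient is Whyte's quasi-isometric rigidity theorem: all $BS(m,n)$ with $1<|m|<|n|$ are quasi-isometric to one another, in particular to $BS(2,4)$. Since $BS(1,2)$ is isomorphic to the subgroup $\langle a^2,b\rangle$ of $BS(2,4)$, and any finitely generated subgroup inclusion is automatically a coarse embedding (it is Lipschitz and proper --- distortion is irrelevant here, so your worry about undistortedness is unnecessary), $BS(1,2)$ coarsely embeds into every $BS(m,n)$ with $1<|m|<|n|$. A coarse embedding of $BS(m,n)$ into a hyperbolic group would then yield one of $BS(1,2)$, contradicting the solvable case. This single observation disposes of the entire range $1<|m|<|n|$, including the coprime case, and replaces your divisibility reductions and the unproven direct construction.
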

\begin{proof} If $m=0$ or $\abs{m}=\abs{n} = 1$ then $BS(m,n)$ is virtually free or virtually abelian, so either is a hyperbolic group or coarsely embeds into one. 

If $\abs{m}=1$ and $\abs{n}>1$ then $BS(m,n)$ is solvable with exponential growth so does not coarsely embed in a hyperbolic space by Corollary \ref{cor:vsolv}. When $1<\abs{m}<\abs{n}$, $BS(1,2)$ coarsely embeds into $BS(m,n)$, since $BS(1,2)$ is isomorphic to a subgroup of $BS(2,4)$ (via the map $a\mapsto a^2,\ b\mapsto b$) which is quasi--isometric to $BS(m,n)$ \cite[Theorem 0.1]{Whyte01}. It remains to check the case $1<\abs{m}=\abs{n}$. In this case $BS(m,n)$ has a finite index subgroup isomorphic to $\Z\times F_n$ \cite[Theorem 0.1]{Whyte01} which has exponential growth and linear divergence, so we are done by Proposition \ref{prop:lindiv}.
\end{proof}
Proposition \ref{prop:lindiv} will follow from the more general Proposition \ref{prop:linseqdiv}. From this stronger statement and \cite[Theorem 6.1]{OOS} one could deduce that the lacunary hyperbolic groups with ``slow non--linear divergence'' constructed in \cite[Section 6]{OOS} do not coarsely embed into any hyperbolic group, and in particular they are not subgroups of any hyperbolic group. However, at the time of writing the proof of \cite[Theorem 6.1]{OOS} relies on \cite[Theorem 2.1]{DMS10}, but there is no such theorem in the published version, and no result from which the property needed in \cite[Theorem 6.1]{OOS} obviously follows.\footnote{We are grateful to the referee for pointing this out to us.}

Additionally, in Proposition \ref{prop:small_canc}, we give a criterion for a $C'(\frac16)$ small cancellation group to have exponentially many fat bigons. This can be used to give an explicit example of a small cancellation group that does not coarsely embed in, and in particular is not a subgroup of any hyperbolic group. This is in contrast with the $C(6)$ small cancellation subgroups of hyperbolic groups constructed by Kapovich-Wise \cite{KW01}.

We finish with two natural questions.

\begin{quest} Which (infinitely presented) small cancellation groups admit a coarse embedding into some hyperbolic group? Which are subgroups of some hyperbolic group?
\end{quest}

\begin{quest} Does every amenable group that admits a coarse embedding into some hyperbolic group have polynomial growth?
\end{quest}

\subsection*{Acknowledgements} The authors were supported in part by the National Science Foundation 
	under Grant No. DMS-1440140 at the Mathematical Sciences Research Institute in Berkeley
	during Fall 2016 program in Geometric Group Theory.
	The authors would also like to thank the Isaac Newton Institute for Mathematical Sciences, Cambridge, for support and hospitality during the programme ``Non-Positive Curvature, Group Actions and Cohomology'' where work on this paper was undertaken. This work was supported by EPSRC grant no EP/K032208/1. We thank Anthony Genevois for suggesting that our results could also be applied to wreath products, and to an anonymous referee for many suggestions which improved the paper.

\section{Fat bigons}\label{sec:fatbigons}

Given a metric space $(X,d)$ $r>0$ and $x\in X$ we denote by $B_r(x)$ the closed ball of radius $r$ centred at $x$, and given a subset $Y$ of $X$ we denote the closed $r$--neighborhood of $Y$ in $X$ by $N_r(Y)=\setcon{x\in X}{d(x,Y)\leq r}$. In this paper all graphs are assumed to be connected, have uniformly bounded degree, and be equipped with the shortest path metric.

\begin{defn}
 Let $X$ be a metric graph, with base vertex $x_0$, and let $x$ be a vertex. Given $L\geq 1$, $s,C\geq 0$, an $(L,s,C)$--\emph{bigon} at $x$ is given by two paths $\alpha_1,\alpha_2$ from $x_0$ to $x$ with the following properties:

\begin{enumerate}
 \item $l(\alpha_i)\leq L d(x_0,x)$,
 \item for $B=N_C(\set{x_0,x})$, we have $d(\alpha_1- B,\alpha_2-B)> s$.
\end{enumerate}
Denote by $\mathcal B^X(L,s,C)$ the set of vertices $x$ so that there exists an $(L,s,C)$--\emph{bigon} at $x$.
\end{defn}

\begin{figure}[h]
 \includegraphics[width=0.8\textwidth]{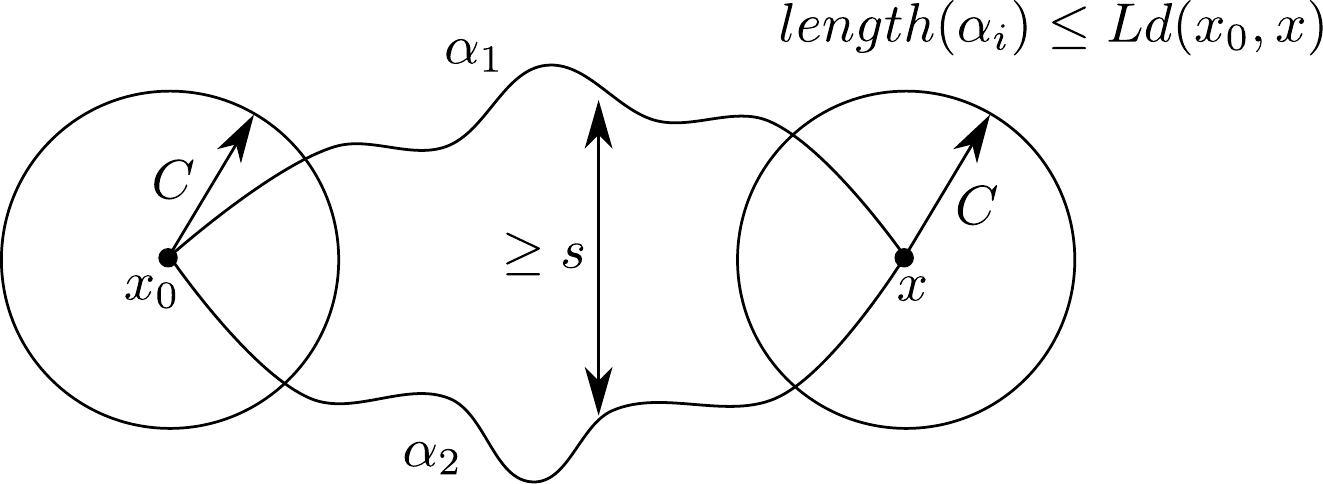}
 \caption{An $(L,s,C)$--bigon at $x$. The two paths connect the basepoint $x_0$ to some $x$, stay far from each other in the middle and are not too long.}
\end{figure}

\begin{defn}
 Let $X$ be a graph, with basepoint $x_0$. We say that $X$ has \emph{exponentially many fat bigons} (at $x_0$) if there exist constants $s_0,c,L>1$ such that for every $s\geq s_0$ there exists a $C$ so that the the function $g(n)=|\mathcal B^X(L,s,C)\cap B_n(x_0)|$ is bounded from below by $c^n$ for infinitely many $n\in\N$.
 
 We say that $X$ has \emph{no fat bigons} (at $x_0$) if for every $L$ there exists $s$ so that for every $C$ we have that $\mathcal B^X(L,s,C)$ is a bounded subset of $X$.
\end{defn}
These two definitions are basepoint invariant (see Lemmas \ref{lem:nofatbpinv} and \ref{lem:embed_bigons}), so we may just talk about a graph having exponentially many (or no) fat bigons without specifying $x_0$.

\begin{lemma}\label{lem:nofatbpinv}
Let $X$ be a graph with no fat bigons at $x_0$, then for any $x_0'\in X$, $X$ has no fat bigons at $x_0'$.
\end{lemma}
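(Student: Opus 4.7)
The plan is to transfer bigons based at $x_0'$ to bigons based at $x_0$ by prepending a short fixed path. Let $d_0=d(x_0,x_0')$ and fix a geodesic $\gamma$ from $x_0$ to $x_0'$ of length $d_0$. Given paths $\alpha_1,\alpha_2$ from $x_0'$ to a vertex $x$ forming an $(L,s,C')$--bigon at $x$ (with basepoint $x_0'$), I will consider the concatenations $\beta_i=\gamma\cdot\alpha_i$, which are paths from $x_0$ to $x$, and show they form an $(L+1,s,C'+d_0)$--bigon at $x$ (with basepoint $x_0$) as soon as $x$ is far enough from $x_0$.

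The length check is immediate: $l(\beta_i)=d_0+l(\alpha_i)\leq d_0+L\,d(x_0',x)\leq L\,d(x_0,x)+(L+1)d_0$, using $d(x_0',x)\leq d(x_0,x)+d_0$. This is at most $(L+1)d(x_0,x)$ whenever $d(x_0,x)\geq (L+1)d_0$, which is why the constant $L$ must be relaxed to $L+1$. For the separation, set $C''=C'+d_0$ and write $B=N_{C''}(\{x_0,x\})$, $B'=N_{C'}(\{x_0',x\})$. Since every point of $\gamma$ lies within $d_0\leq C''$ of $x_0$, we have $\gamma\subseteq B$, so $\beta_i-B\subseteq \alpha_i-B$. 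For any $y\in\alpha_i-B$, the triangle inequality gives $d(y,x_0')\geq d(y,x_0)-d_0>C''-d_0=C'$ and $d(y,x)>C''>C'$, so $y\in\alpha_i-B'$. Thus $\beta_i-B\subseteq\alpha_i-B'$, which forces $d(\beta_1-B,\beta_2-B)\geq d(\alpha_1-B',\alpha_2-B')>s$, as required.

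With these two estimates in hand, the proof finishes by a direct application of the hypothesis. Fix $L\geq 1$, and apply the no--fat--bigons condition at $x_0$ with constant $L+1$ to obtain $s\geq 0$ such that the set of $y$ admitting an $(L+1,s,C'')$--bigon at $x_0$ is bounded for every $C''$. I claim this same $s$ works at $x_0'$: for any $C'$, a vertex $x\in\mathcal B^X(L,s,C')$ (with basepoint $x_0'$) either satisfies $d(x_0,x)<(L+1)d_0$, or lies in $\mathcal B^X(L+1,s,C'+d_0)$ (with basepoint $x_0$) by the construction above; in either case $x$ belongs to a bounded set.

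The only subtlety I anticipate is the bookkeeping around the two neighborhoods: $C''$ must be large enough ($\geq d_0$) for $N_{C''}(x_0)$ to absorb the prepended geodesic $\gamma$, and simultaneously small enough ($\leq C'+d_0$) that complements of $N_{C''}(\{x_0,x\})$ still sit inside complements of $N_{C'}(\{x_0',x\})$ after translating the basepoint. The choice $C''=C'+d_0$ threads both requirements, and it is precisely this tradeoff that makes the argument work rather than any deeper geometric input.
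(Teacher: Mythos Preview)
Your proof is correct and follows essentially the same approach as the paper: prepend a fixed geodesic from $x_0$ to $x_0'$ to transport an $(L,s,C')$--bigon at $x_0'$ into a bigon at $x_0$ with relaxed parameters, then invoke the hypothesis. The only cosmetic differences are that the paper uses the cruder constant $K=2L+1$ (valid once $d(x_0,x)\geq d_0$) and phrases the conclusion as a proof by contradiction, whereas you use $K=L+1$ (valid once $d(x_0,x)\geq (L+1)d_0$) and argue directly; your verification of the separation condition is also more explicit than the paper's.
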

\begin{proof}
If $d(x_0,x)\geq d(x'_0,x_0) = d$ then an $(L,s,C)$--bigon at $x$ (with respect to the basepoint $x'_0$) can be extended by a geodesic from $x'_0$ to $x_0$ to form a $(K,s,C+d)$--bigon at $x$ (with respect to the basepoint $x_0$), where $K$ will be determined shortly. Since the sides of the original bigon had length at most $Ld(x'_0,x)$, the sides of the new bigon have length at most
\[
 Ld(x'_0,x)+d(x'_0,x_0) \leq Ld(x_0,x)+(L+1)d(x'_0,x_0) \leq (2L+1)d(x_0,x),
\]
so we may choose $K=2L+1$.

Suppose for a contradiction that there exists some $L\geq 1$ such that for all $s>0$ there is a $C>0$ and infinitely many $(L,s,C)$--bigons at $y_1,y_2,\ldots$ (based at $x'_0$). Since $X$ has uniformly bounded degree, infinitely many of the $y_i$ satisfy $d(x_0,y_i)\geq d(x'_0,x_0)$, so there are infinitely many $(2L+1,s,C+d)$--bigons based at $x_0$. This is a contradiction.
\end{proof}

Having no fat bigons is a strong negation of having exponentially many bigons.
Our goal for this section is the following:

\begin{thm}\label{thm:no_emb_in_hyp}
 Let $X$ be a graph with exponentially many fat bigons. Then $X$ does not coarsely embed into any hyperbolic graph of uniformly bounded degree.
\end{thm}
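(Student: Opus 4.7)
The plan is to derive a contradiction. Assume $\phi \colon X \to Y$ is a coarse embedding into a $\delta$-hyperbolic graph $Y$ of bounded degree $\Delta$. By Lemma~\ref{lem:embed_bigons}, exponentially many fat bigons in $X$ push forward to exponentially many fat bigons in $Y$: the image of an $(L,s,C)$-bigon in $X$, completed by short geodesics between consecutive vertex images, is an $(L',s',C')$-bigon in $Y$ at the base vertex $\phi(x_0)$, with parameters controlled by $\phi$'s distortion functions and by $\Delta$. The theorem therefore reduces to showing that a hyperbolic bounded-degree graph cannot have exponentially many fat bigons.

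To prove that reduction, I would in fact establish the stronger ``no fat bigons'' property for $Y$: for every $L'$ there is $s_0' = s_0'(L',\delta,\Delta)$ such that $\mathcal{B}^Y(L',s',C')$ is bounded for every $s' \ge s_0'$ and every $C'$. Consider an $(L',s',C')$-bigon at $y$ with $n := d(y_0,y)$ large. The loop $\alpha_1\overline{\alpha_2}$ has length at most $2L'n$. By the linear isoperimetric inequality available in hyperbolic bounded-degree graphs (for instance via the Rips complex, which has a linear Dehn function), this loop bounds a combinatorial filling disc of area at most $KL'n$, with $K = K(\delta,\Delta)$. Because $\alpha_1$ and $\alpha_2$ are separated by more than $s'$ outside $N_{C'}(\{y_0,y\})$, the region enclosed contains a transverse ``strip'' of width exceeding $s'$ spanning combinatorial length of order $n$. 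Exponential growth of tubes about geodesics in hyperbolic bounded-degree graphs forces such a strip to contain at least $c n \lambda^{s'}$ cells, with $c > 0$ and $\lambda > 1$ depending on $\delta$ and $\Delta$. Combining the two estimates gives $\lambda^{s'} \le O(L')$, hence $s' \le C_1 \log L' + C_2$ uniformly in $n$; choosing any $s_0'$ past this bound forces $\mathcal{B}^Y(L',s',C')$ into a fixed ball about $y_0$.

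The main technical obstacle I anticipate is making the ``thick strip forces large combinatorial area'' step precise for an arbitrary hyperbolic bounded-degree graph rather than for a Cayley graph of a hyperbolic group: one must compare combinatorial area in the filling disc with combinatorial width of the strip in $Y$, using the Rips complex to access both a linear isoperimetric inequality and the exponential growth of geodesic tubes in a single framework. The subcase where $Y$ has polynomial growth (so $Y$ is quasi-isometric to $\mathbb{R}$) is degenerate and handled separately -- such graphs support no wide bigons at all. With the ``no fat bigons'' conclusion in $Y$ secured, the contradiction with the transferred exponentially many fat bigons in $Y$ is immediate, since a bounded $\mathcal{B}^Y(L',s',C')$ cannot support $\ge c^n$ vertices in $B_n(y_0)$ for infinitely many $n$.
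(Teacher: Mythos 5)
Your first step (pushing the bigons forward through the coarse embedding) is exactly the paper's Lemma \ref{lem:embed_bigons}, and your overall reduction --- show that a bounded-degree hyperbolic graph has \emph{no} fat bigons and derive a contradiction --- is also the paper's strategy. The divergence is in how you rule out fat bigons in the hyperbolic graph $Y$, and this is where there is a genuine gap. The paper works directly with \emph{lengths}: it places $\sim n/s'$ disjoint balls of radius $s'$ along a geodesic $[y_0,y]$, observes that each ball can meet at most one of the two paths, and then invokes the exponential divergence of ball-avoiding paths (\cite[Proposition III.H.1.6]{BH99}) to conclude that one path has length at least $(n/s')(1+\epsilon)^{s'}$, contradicting the length bound $L'n$. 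Your area-based argument needs the analogous \emph{area} lower bound for the filling disc, namely that the ``thick strip'' contributes at least $c\,n\,\lambda^{s'}$ cells, and this is precisely the step you have not supplied. As stated it rests on ``exponential growth of tubes about geodesics in hyperbolic bounded-degree graphs,'' which is a volume statement that is simply false for general bounded-degree hyperbolic graphs (a bi-infinite line, or a bounded-degree tree of subexponential growth, is hyperbolic and its geodesic tubes grow linearly); your proposed dichotomy ``polynomial growth $\Rightarrow$ quasi-isometric to $\mathbb{R}$'' is also false for graphs, so the degenerate case is not actually disposed of.

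Even where tubes do grow exponentially, the inference from ``the region between the two paths is large in $Y$'' to ``the filling disc has many cells'' does not go through as written: a combinatorial filling of the loop $\alpha_1\overline{\alpha_2}$ is a map from a disc into $Y$ (or its Rips complex) that need be neither injective nor surjective onto any ``enclosed region,'' and in a non-planar graph the enclosed region is not even well defined. The standard coarea/pullback argument (pulling back spheres around a point $m$ of $[y_0,y]$ avoided by one of the paths) only yields one separating level set per radius, i.e.\ an area lower bound that is \emph{linear} in $s'$ per ball, hence $\gtrsim n$ in total --- not enough to contradict the upper bound $O(L'n)$ for large $s'$. To upgrade this to an exponential-in-$s'$ bound you would need to show each level set maps onto a long path inside the corresponding sphere, which is essentially the exponential-divergence-of-paths statement again; at that point the detour through the isoperimetric inequality buys nothing and the direct length argument of the paper is both shorter and actually correct. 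So the right fix is to replace the area step by the length step: one of the two paths avoids roughly half of $\sim n/s'$ disjoint balls of radius $s'$ centred on $[y_0,y]$, and each avoided ball forces that path to contain a disjoint subpath of length at least $(1+\epsilon)^{s'}$.
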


For notational purposes let us recall the definition of a coarse embedding. Given two graphs $X,Y$, with vertex sets $VX,VY$ respectively, a {\em coarse embedding} of $X$ into $Y$ is a map $f:VX\to VY$, a constant $K\geq 1$ and a function $\rho:\N\to\N$ such that $\rho(n)\to\infty$ as $n\to \infty$ and
\begin{equation}\label{eq:coarse}
 \rho(d_X(x,y))\leq d_Y(f(x),f(y))\leq Kd_X(x,y).
\end{equation}
The proof of Theorem \ref{thm:no_emb_in_hyp} is given as a pair of lemmas.

\begin{lemma}\label{lem:embed_bigons}
 Let $X,Y$ be bounded degree graphs. If $f$ is a coarse embedding of $X$ into $Y$ and $X$ has exponentially many fat bigons at $x_0$, then $Y$ has exponentially many fat bigons at $y_0=f(x_0)$. In particular, having exponentially many fat bigons does not depend on the choice of a basepoint $x_0$.
\end{lemma}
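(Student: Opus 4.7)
The plan is to lift each fat bigon in $X$ to a walk in $Y$ by replacing each edge $\{u_j,u_{j+1}\}$ of a bigon side $\alpha_i=(u_0=x_0,u_1,\ldots,u_l=x)$ by a geodesic from $f(u_j)$ to $f(u_{j+1})$ in $Y$ (which has length at most $K$). This produces walks $\beta_1,\beta_2$ from $y_0$ to $y:=f(x)$ of length at most $KL\cdot d_X(x_0,x)$, and I would then try to show that for enough choices of $x$ the pair $(\beta_1,\beta_2)$ witnesses $y\in\mathcal B^Y(L',s',C')$ for a uniform triple of constants.

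The fatness/separation condition (2) should transfer fairly cleanly. Given $s'\ge s_0'$, I can use $\rho(n)\to\infty$ to choose $s\ge s_0$ with $\rho(s)>s'+2K$, let $C$ be the constant supplied by the hypothesis for this $s$, and set $C':=K(C+1)+1$. If a vertex $p\in\beta_1$ is within $K$ of some $f(u_j)$ and lies outside $N_{C'}(\{y_0,y\})$, then $d_Y(y_0,f(u_j)), d_Y(y,f(u_j))>C'-K\ge KC$; the upper bound $d_Y\le Kd_X$ then forces $u_j\notin N_C(\{x_0,x\})$. Doing the same for $q\in\beta_2\setminus N_{C'}(\{y_0,y\})$ and invoking the original separation $d_X(u_j,v_{j'})>s$ together with $d_Y\ge\rho\circ d_X$ gives $d_Y(p,q)\ge\rho(s)-2K>s'$.

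The real obstacle is condition (1): one only has $l(\beta_i)\le KL\cdot d_X(x_0,x)$, while the definition demands $l(\beta_i)\le L'\cdot d_Y(y_0,y)$ for a single $L'$, and since $\rho$ may grow arbitrarily slowly the ratio $d_X(x_0,x)/d_Y(y_0,y)$ can be unbounded in general. The rescue is a counting argument exploiting the exponential quantity of bigons. Because $\rho\to\infty$ and $X$ has bounded degree, each fiber of $f$ has size at most some constant $P$, so the $c^n$ bigon endpoints in $B_n(x_0)$ give at least $c^n/P$ distinct images in $B_{Kn}(y_0)$. A pigeonhole on the value $d_Y(y_0,y)\in\{0,\ldots,Kn\}$ produces at least $c^n/(P(Kn+1))$ images sharing a common distance $M$; comparing this lower bound with the ball-size upper bound $|B_M(y_0)|\le(D_Y+1)^M$ forces $M\ge\gamma n$ for some $\gamma=\gamma(c,K,D_Y)>0$ and all sufficiently large $n$. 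For these images, $l(\beta_i)\le KLn\le(KL/\gamma)M=L'\cdot d_Y(y_0,y)$ with $L':=KL/\gamma$, so $y\in\mathcal B^Y(L',s',C')$; setting $m:=Kn$ yields $|\mathcal B^Y(L',s',C')\cap B_m(y_0)|\ge(c')^m$ for some $c'\in(1,c^{1/K})$ and infinitely many $m$. Basepoint-invariance should then follow by the same kind of geodesic-extension trick used in Lemma~\ref{lem:nofatbpinv}.
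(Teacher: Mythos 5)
Your proof is correct and follows essentially the same strategy as the paper: lift the bigons through $f$ by connecting images of consecutive vertices with geodesics, transfer the separation condition directly using $\rho$ and the Lipschitz bound, and rescue the length condition (the genuine obstacle, which you correctly identify) by a counting argument combining the uniform bound on fibers of $f$ with the exponential volume upper bound in $Y$ to show that for exponentially many bigon endpoints $x$ the distance $d_Y(y_0,f(x))$ is at least a definite fraction of $d_X(x_0,x)$. The paper organizes this count by bounding the complement of the set $\{x : d_Y(y_0,f(x))>\epsilon d_X(x_0,x)\}$ via the size of $B_{\epsilon n}(y_0)$ rather than pigeonholing on the exact distance, but this is only a cosmetic difference.
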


The idea of proof is that, despite the fact that the distance from the basepoint of a point of $X$ could decrease drastically after applying a coarse embedding, this cannot happen for too many points because the growth of $Y$ is (at most) exponential. More specifically, there must be many points $x$ so that there is a fat bigon at $x$ and the distance from the basepoint of $Y$ to $f(x)$ is linear in the distance from the basepoint of $X$ to $x$. For such $x$, there is a fat bigon at $f(x)$ (with slightly worse constants).

\begin{proof}
Let $f$ be a coarse embedding of $X$ into $Y$ and let $K,\rho$ satisfy (\ref{eq:coarse}). Fix $r$ such that $\rho(r)>0$ and let $\Delta_X,\Delta_Y\geq 2$ be the maximal vertex degrees of $X,Y$ respectively.
By assumption there exist constants $d,L>1$ such that for all $s\geq s_0$ there is a constant $C$ such that
\[|\mathcal B^X(L,s,C)\cap B^X_n(x_0)|\geq d^n
\]
holds for all $n$ in an infinite subset $I\subseteq \N$. Define $\mathcal A_\epsilon=\{x\in \mathcal B^X(L,s,C): d(y_0,f(x))>\epsilon d(x_0,x)\}$ and notice that 
\[|\mathcal A_\epsilon\cap B^X_n(x_0)|\geq (\Delta_X)^{-(r+1)}|\mathcal B^X(L,s,C)\cap B^X_n(x_0)| - \Delta_Y^{\epsilon n+1}
\]
since $|f^{-1}(y)|\leq \Delta_X^{r+1}$ for any vertex $y\in VY$ and any ball of radius $\epsilon n$ in $Y$ contains at most $\Delta_Y^{\epsilon n+1}$ vertices. Choose $\epsilon>0$ sufficiently small that \[(\Delta_X)^{-(r+1)}d^n - \Delta_Y^{\epsilon n+1}\geq \left(\frac{1+d}{2}\right)^n\]
holds for all $n$ sufficient large. Fix such an $\epsilon>0$ and set $\mathcal A=\mathcal A_\epsilon$.
\par\medskip

{\bf Claim.} $f(\mathcal A\cap B^X_n(x_0))\subset\mathcal B^Y(KL\epsilon^{-1},\rho(s)-2K, KC+K)\cap B^Y_{Kn}(y_0)$.

\begin{proof}[Proof of Claim]
Let $x\in\mathcal A \cap B_n^X(x_0)$. Since $f$ is $K$--Lipschitz and $f(x_0)=y_0$, we have $f(x)\in B^Y_{Kn}(y_0)$.

If $\alpha_1,\alpha_2$ form a $(L,s,C)$--bigon at $x$, we can apply $f$ to the vertices of the $\alpha_i$ and connect consecutive points by geodesics in $Y$, thereby obtaining new paths $\alpha'_1,\alpha'_2$ from $y_0$ to $f(x)$. The length of $\alpha'_i$ is at most $K$ times the length of $\alpha_i$, and hence $\abs{\alpha'_i}\leq KL\epsilon^{-1}d_Y(y_0,f(x))$.

Given two vertices $v'_1\in\alpha'_1$, $v'_2\in\alpha'_2$ not in $N_{KC+K}(\set{y_0,f(x)})$ there are vertices $v_i\in\alpha'_i$ and $w_i\in\alpha_i$ such that $d_Y(v_i,v'_i)\leq K$, $f(w_i)=v_i$ and $w_i\not\in N_C(\set{x_0,x})$ for $i=1,2$. Hence $d_X(w_1,w_2)> s$, so $d_Y(v_1,v_2)\geq \rho(s)$ by assumption and $d_Y(v'_1,v'_2)\geq \rho(s)-2K$, as required.
\end{proof}

Since $\abs{f^{-1}(v)}\leq (\Delta_X+1)^r$ for each $v\in VY$, we see that 
\[
 \abs{\mathcal B^Y(LK\epsilon^{-1},\rho(s)-K, KC+K)\cap B^Y_{Kn}(y_0)}\geq (\Delta_X+1)^{-r}\left(\frac{1+d}{2}\right)^{n}
\]
holds for all sufficiently large $n\in I$. This easily implies that $Y$ has exponentially many fat bigons.
\end{proof}

\begin{lemma}
 Let $X$ be a $\delta$--hyperbolic graph. Then $X$ has no fat bigons.
\end{lemma}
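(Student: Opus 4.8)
The plan is to exploit the \emph{exponential divergence} of geodesics in hyperbolic spaces, equivalently the fact that nearest--point projection onto a geodesic is exponentially contracting. Fix $L$; I will produce a constant $s_0=s_0(\delta,L)$ so that for every $s\geq s_0$ and every $C$, any $(L,s,C)$--bigon at a vertex $x$ forces $d(x_0,x)$ to be bounded in terms of $C$, which is exactly ``no fat bigons''. So suppose $\alpha_1,\alpha_2$ form an $(L,s,C)$--bigon at $x$, write $D=d(x_0,x)$, let $\gamma$ be a geodesic from $x_0$ to $x$, and let $\pi$ be a nearest--vertex projection onto $\gamma$ (coarsely Lipschitz, with constants depending only on $\delta$).

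The first step converts \emph{mutual} separation of the two paths into distance from $\gamma$. Since each $\alpha_i$ is connected and its endpoints project near $x_0$ and $x$, the set $\pi(\alpha_i)$ is $O(\delta)$--dense in $\gamma$. Hence for each $u$ on $\gamma$ in the middle range (distance between $C+s$ and $D-C-s$ from $x_0$) there are vertices $a_i(u)\in\alpha_i$ with $d(\pi(a_i(u)),u)\leq O(\delta)$; both lie outside $B=N_C(\set{x_0,x})$ (their projections are in the middle range), so $d(a_1(u),a_2(u))>s$. Because $a_1(u),a_2(u)$ both project near $u$, the geodesic between them passes $O(\delta)$--close to $u$, so $d(a_1(u),u)+d(a_2(u),u)\geq s-O(\delta)$. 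Thus for every such $u$, at least one path has a vertex projecting near $u$ and at distance $\geq s/2-O(\delta)$ from $\gamma$.

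The heart is a length estimate. Let $F$ be the set of vertices of $\alpha_1\cup\alpha_2$ at distance $\geq s/2-O(\delta)$ from $\gamma$; by the previous step $\pi(F)$ coarsely covers the middle range of $\gamma$, a subsegment of length at least $D-2C-2s-O(\delta)$. Decompose the paths into maximal subpaths (\emph{excursions}) staying at distance $\geq s/4$ from $\gamma$; any excursion $\sigma_j$ meeting $F$ must climb from level $s/4$ to level $s/2-O(\delta)$ and back, so $\ell(\sigma_j)\geq s/2-O(\delta)$, giving at most $O(LD/s)$ of them. On each excursion the projection moves at exponentially suppressed speed, so contraction gives $\mathrm{diam}\,\pi(\sigma_j)\leq C_1\,\ell(\sigma_j)\,e^{-(s/4)/C_1}+C_1$ with $C_1=C_1(\delta)$. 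Summing, and using $\sum_j\ell(\sigma_j)\leq \ell(\alpha_1)+\ell(\alpha_2)\leq 2LD$, bounds the span of $\pi(F)$ by $2C_1LD\,e^{-(s/4)/C_1}+O(C_1LD/s)$. Comparing with the lower bound and dividing by $D$ (taking $D\geq 4C+4s+O(\delta)$, so the lower bound exceeds $D/2$) yields $\tfrac12\leq 2C_1L\,e^{-(s/4)/C_1}+O(C_1L/s)$; the right--hand side tends to $0$ as $s\to\infty$, so a large $s_0=s_0(\delta,L)$ makes this impossible. Hence no bigon survives once $D\geq 4C+4s+O(\delta)$, and $\mathcal B^X(L,s,C)$ is bounded.

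The main obstacle is that the obvious estimate is too weak. The logarithmic fellow--traveling bound (every point of $\gamma$ lies within $\delta\log_2(LD)+O(1)$ of each path of length $\leq LD$) only shows the two sides meet within $O(\delta\log_2(LD))$ near the midpoint, bounding $s$ by something growing like $\log D$ rather than by a constant, and that is not enough to bound $\mathcal B^X(L,s,C)$. Upgrading to a \emph{constant} bound on $s$ is what forces the use of separation along the \emph{entire} middle of $\gamma$ together with exponential (not merely logarithmic) contraction; the delicate point is the excursion bookkeeping, ensuring the final constants depend only on $\delta$ and $L$ and not on $C$ or $D$.
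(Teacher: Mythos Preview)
Your argument is correct. Both your proof and the paper's rest on the same geometric input---exponential divergence of geodesics, equivalently exponential contraction of nearest--point projection onto a geodesic---but the bookkeeping is organised differently. The paper discretises: it places disjoint balls of radius $s/2$ along $[x_0,x]$, observes that the $s$--separation forces each ball to meet at most one of $\alpha_1,\alpha_2$, and then invokes the fact (its Claim~2, via \cite[III.H.1.6]{BH99}) that a path avoiding $k$ such balls has length at least $k(1+\epsilon)^{s/2}$; since one path avoids roughly $D/s$ balls, it is too long once $(1+\epsilon)^{s/2}/s \gg L$. You instead run a covering argument: the far set $F$ projects onto the whole middle, but exponential contraction bounds $\sum_j\mathrm{diam}\,\pi(\sigma_j)$ in terms of total length and the number of excursions. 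The paper's ball--counting avoids your excursion decomposition and yields a slightly sharper threshold $s_0=O_\delta(\log L)$ (from $(1+\epsilon)^{s_0}\gtrsim Ls_0$), whereas your additive $+C_1$ per excursion, summed over $O(LD/s)$ excursions, forces $s_0=O_\delta(L)$; for the bare existence statement of the lemma this is immaterial. Your write--up glosses over one small point worth making explicit: to conclude $a_i(u)\notin N_C(\{x_0,x\})$ you need $d(a_i(u),x_0)\geq d(\pi(a_i(u)),x_0)-O(\delta)$, which is exactly the coarse--Lipschitz property of $\pi$ (the paper's Claim~1) applied with one of the two points on $\gamma$.
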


We recall that $X$ is $\delta$-hyperbolic if, for any geodesic triangle with sides $\gamma_1,\gamma_2,\gamma_3$, we have $\gamma_1\subseteq N_\delta(\gamma_2\cup\gamma_3)$.

The idea of proof is the following. Suppose we have two paths connecting the same pair of points which stay far from each other in the middle. Then any point on a geodesic connecting the endpoints can be close to at most one of the paths. Hence, one of the two paths stays far from at least ``half'' of the geodesic. Travelling far from a geodesic in a hyperbolic space is expensive, hence the path that stays far from half of the geodesic is long. More precisely, we consider disjoint balls along the geodesic, and count how many are avoided by each path.

\begin{proof}
We start with two easy facts about hyperbolic spaces. Several forms of the first one are well-known, and some version of the second one is used in \cite[Lemma 4.2]{Sis16}. From now on we assume $\delta\geq 1$.

\par\medskip

{\bf Claim 1.} Let $x,y,p,q\in X$, and let $p',q'$ on a geodesic $[x,y]$ satisfy $d(p,p')=d(p,[x,y])$ and $d(q,q')=d(q,[x,y])$. Then $d(p',q')\leq d(p,q)+8\delta$. Moreover, any geodesic $[p,q]$ passes $2\delta$-close to any point $m$ on the subgeodesic $[p',q']$ of $[x,y]$ satisfying $d(p',m),d(q',m)>2\delta$.

\begin{proof}[Proof of Claim 1.] 
 Let $\alpha_p,\alpha_q$ be geodesics from $p$ to $p'$ and from $q$ to $q'$ respectively. Consider the geodesic quadrangle whose sides are $\alpha_p$, $\alpha_q$ and geodesics $[p,q]$ and $[p',q']$. Since $X$ is $\delta$-hyperbolic, we have $[p',q']\subseteq N_{2\delta}(\alpha_p\cup\alpha_q\cup[p,q])$.\footnote{This is a standard fact about quadrangles in hyperbolic spaces, proven by cutting it into two triangles using a diagonal.}
Moreover, any point $r$ on $\alpha_p$ within distance $2\delta$ of $[x,y]$ must be in $B_{2\delta}(p')$, and similarly for $\alpha_q$: to see this, let $r'$ on $[x,y]$ satisfy $d(r,r')\leq 2\delta$. Then $d(p,r')\leq d(p,r)+d(r,r')\leq (d(p,p')-d(p',r))+2\delta$. Since by our choice of $p'$ we must have $d(p,p')\leq d(p,r')$, this yields $d(p',r)\leq 2\delta$, as required. Therefore, every point on $[p',q']$ is either in $B_{2\delta}(p')$, $B_{2\delta}(q')$ or in the $2\delta$-neighbourhood of $[p,q]$, proving the ``moreover'' part of the Claim.
Continuing, we see that either $d(p',q')\leq 4\delta$ (in which case $d(p',q')\leq d(p,q)+8\delta$ clearly holds), or, for every $\epsilon>0$ there exist points $a,b\in[p',q']$ such that $2\delta< d(a,p')\leq 2\delta+\epsilon$, $2\delta< d(b,q')\leq 2\delta+\epsilon$ and points $c,d\in[p,q]$ such that $d(a,c),d(b,d)\leq 2\delta$. Thus
\[
 d(p',q')\leq d(a,b)+4\delta + 2\epsilon \leq d(c,d) + 8\delta + 2\epsilon \leq d(p,q) + 8\delta + 2\epsilon.
\]
As this holds for all $\epsilon>0$, the result follows.
\end{proof}

{\bf Claim 2.} There exist $\epsilon, s_0>0$ so that for each $s\geq s_0$ the following holds. Let $x,y\in X$ and let $B_1,\dots,B_k$ be disjoint balls of radius $s$ centred on a geodesic $[x,y]$, with centre at distance at least $s$ from $\set{x,y}$. Let $\alpha$ be a path from $x$ to $y$ that avoids all $B_i$. Then $l(\alpha)\geq k\cdot (1+\epsilon)^{s}$.

\begin{proof}[Proof of Claim 2]
Recall that we always assume $\delta\geq 1$. Let us now set $s_0= 50\delta$.

Let $m_i$ be the centre of the ball $B_i$. The $B_i$ define disjoint subgeodesics $I_i=[c_i,d_i]$ of $[x,y]$ with midpoint $m_i$ of length $2s$, and up to reindexing we can assume that, in the natural order $\prec$ on $[x,y]$ defined by $a\prec b$ whenever $d(x,a)< d(x,b)$, we have $c_i\prec d_i$ and $d_i\prec c_{i+1}$.

Let $\pi:X\to [x,y]$ be a closest-point projection, meaning a map satisfying $d(p,\pi(p))=d(p,[x,y])$ for all $p\in X$. By Claim 1 $d(\pi(p),\pi(q))\leq d(p,q)+8\delta$ for all $p,q\in X$, so if $d(p,q)\leq 1$, then $d(\pi(p),\pi(q))\leq 9\delta$. Using this, one can construct disjoint subpaths $\alpha_1,\ldots,\alpha_k$ of $\alpha$ such that the closest point projection of the endpoints $a_i,b_i$ of $\alpha_i$ onto $[x,y]$ are contained in $I_i$ and $d(\pi(a_i),c_i)\leq 9\delta,d(\pi(b_i),d_i)\leq 9\delta$ in the following way:
let $a_i$ be the first vertex of $\alpha$ that projects inside $I_i$, let $b'_i$ the first vertex of $\alpha$ so that $d_i\prec \pi(b'_i)$, and define $b_i$ to be the vertex immediately preceding $b'_i$. Now set $\alpha_i$ to be the corresponding subpath of $\alpha$ from $a_i$ to $b_i$: it is immediate that the required properties are satisfied.

Notice that $d(\pi(a_i),m_i),d(\pi(b_i),m_i)\geq s-9\delta>2\delta$. Hence, Claim 1 implies that, for each $i$, there exists $n_i$ on a geodesic $[a_i,b_i]$ with $d(m_i,n_i)\leq 2\delta$. In particular, $\alpha_i$ avoids a ball of radius $s-2\delta$ around $n_i$. Notice also that $l(\alpha_i)\geq 1$ because the distance between its endpoints $a_i,b_i$ is larger than $1$ ($a_i,b_i$ are outside $B_s(m_i)$ but any geodesic connecting them passes $2\delta$-close to $m_i$, and $s-2\delta\geq 1$).

Applying \cite[Proposition III.H.1.6]{BH99} (and taking the exponential on both sides of the inequality that it provides), we get $l(\alpha_i)\geq 2^{(s-2\delta-1)/\delta}$. There exists $\epsilon>0$ so that the right hand side is $\geq (1+\epsilon)^s$, for each $s\geq 50\delta$. 

Since the $\alpha_i$ are disjoint subpaths of $\alpha$, we have $l(\alpha)\geq k\cdot (1+\epsilon)^{s}$, as required.
\end{proof}

 The final claim is the following.
 
\par\medskip

 {\bf Claim 3.} For every $L\geq 1$, there exists $s$ large enough so that for every $C$ there exists $n$ with the following property: Let $x,y\in X$ with $d(x,y)\geq n$. Let $\alpha_1,\alpha_2$ be paths from $x$ to $y$ so that $d(\alpha_1- B,\alpha_2-B)\geq s$, for $B=B_C(x_0)\cup B_C(x)$. Then either $l(\alpha_1)\geq Ld(x,y)$ or $l(\alpha_2)\geq Ld(x,y)$.
 
 \begin{proof}[Proof of Claim 3]
 Fix $s_0,\epsilon$ as in Claim 2. Up to increasing $s_0$, we can assume that for every $C$ there exists $n=n(C)$ so that $t\geq n$ implies $k(t)(1+\epsilon)^{s_0}\geq Lt$, where $k(t)=\lfloor(t-2C-2s_0)/6s_0\rfloor$. Let $s=2s_0$, fix any $C$ and let $n=n(C)$.
 
 Suppose $d(x,y)\geq n$. We can find $2k(d(x,y))$ disjoint balls $B_i$ of radius $s_0$ whose centres lie on $[x,y]$ at distance at least $C+s_0$ from the endpoints. At most one of $\alpha_1,\alpha_2$ can intersect any given $B_i$ and hence, up to swapping indices we can assume that $\alpha_1$ avoids at least $k(d(x,y))$ of the $B_i$. By Claim 2, the length of $\alpha_1$ is at least $Ld(x,y)$, as required.
 \end{proof}
Claim 3 clearly implies that $X$ has no fat bigons.
\end{proof}

\section{Groups with fat bigons}\label{sec:examples}

Let $X$ be a Cayley graph of a finitely generated one-ended group. Following \cite{DMS10} we define the divergence of a pair of points $a,b\in X$ relative to a point $c \not\in \set{a,b}$ as the length of the shortest path from $a$ to $b$ avoiding a ball
around $c$ of radius $\frac{1}{2}d(c, \set{a,b})-2$. Such paths always exist by \cite[Lemma 3.4 (pp.2496)]{DMS10}. The divergence
of a pair $a,b$, $Div(a,b)$ is the supremum of the divergences of $a,b$ relative to all $c \in X\setminus\set{a,b}$.

The divergence of $X$ is given by $Div_X(n)=\max\setcon{Div(a,b)}{d(a,b)\leq n}$.

\begin{rmk}\label{rem:divergence_parameters}
Up to the usual notion of equivalence of functions, the parameters $\delta_0=\frac12$ and $\gamma_0=2$ in the definition of divergence function can be replaced by any positive constants $\delta\leq\delta_0$ and $\gamma\geq \gamma_0$ by \cite[Lemma 3.11.(c) (pp.2500)]{DMS10}, where $\delta_0$ and $\gamma_0$ for a one-ended Cayley graph are provided by \cite[Lemma 3.4 (pp.2496)]{DMS10}.
\end{rmk}

\begin{rmk}\label{rem:fixing_DMS}
When combined with the fact that the parameters in the divergence function can be changed as discussed above, \cite[Lemma 3.17.(ii) (pp.2502)]{DMS10} states that if an infinite group has no asymptotic cone with a cut-point, then the divergence function of (any Cayley graph of) the group is linear. The proof of \cite[Lemma 3.17.(ii)]{DMS10} uses one unstated hypothesis, which is that the parameters $\delta,\gamma$ should be chosen so that the divergence function only takes finite values (see \cite[Remark 3.5 (pp.2497)]{DMS10} for a discussion of the issue of finiteness of the divergence function). 
The extra hypothesis is used in the following way. Assuming that the divergence function (with certain parameters) is superlinear, the authors find a sequence of triples of points $a_n,b_n,c_n$ that witness superlinearity of a suitable variation of the divergence function. They set $d_n=d(a_n,b_n)$, and consider an asymptotic cone with $(d_n)$ as scaling factor. However, if the divergence function takes infinite values, then it might happen that the sequence $(d_n)$ is bounded (since then the variation of the divergence function would also take infinite values), in which case the asymptotic cone is not well-defined. With the additional hypothesis, this problem does not arise. This does not affect the consequence that the divergence function of a one-ended group \emph{with the parameters given above} is linear if no asymptotic cone contains cut-points, since this extra hypothesis holds for one-ended Cayley graphs by \cite[Lemma 3.4]{DMS10}.

The remaining case is that of a multi-ended group, but such groups always have cut-points in their asymptotic cones. This can be shown either by noticing that \cite[Lemma 3.14 (pp.2501)]{DMS10} (with $\delta=0$) applies, or by using the fact that multi-ended groups admit non-trivial splittings over a finite subgroup ($G=A*_C B$ or $G=A*_C$  with finite $C$) \cite{St68, St71}, so they are relatively hyperbolic with proper peripheral subgroups. Therefore all their asymptotic cones are non-trivially tree-graded, so admit cut-points \cite[Theorem 1.11]{DS05}.
 
 The converse statement that if a group has linear divergence then its asymptotic cones do not have cut-points is \cite[Lemma 3.17.(i) (pp.2502)]{DMS10}.
\end{rmk}

\subsection{Linear divergence groups}
\begin{prop}\label{prop:div}
 Let $X$ be the Cayley graph of a finitely generated one-ended group and fix the vertex $1$ as a basepoint. If $Div_X(20d(1,x))\leq Dd(1,x)$ for some $D\geq 1$ and $x\in VX$, then for every $s\geq 1$ there exists a $(20D,s,2s)$--bigon at $x$.
\end{prop}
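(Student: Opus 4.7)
Let $n=d(1,x)$ and let $\alpha_1$ be a geodesic from $1$ to $x$. The plan is to construct $\alpha_2$ as a long detour around the midpoint of $\alpha_1$ by invoking the divergence hypothesis on a pair of points that extend $[1,x]$ on both sides.

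The case $s\geq n/4$ is trivial: then $\alpha_1\subseteq N_{2s}(\{1,x\})=B$, so $\alpha_1\setminus B=\emptyset$ and condition (2) in the definition of a bigon is vacuous; take $\alpha_2=\alpha_1$. So assume $s<n/4$. Using that $X$ is an infinite one-ended Cayley graph, I would extend $[1,x]$ to a bi-infinite geodesic and choose points $a,b$ on it with $d(1,a)=d(x,b)=19n/2$, giving $d(a,b)=20n$. With $c$ the midpoint of $[1,x]$, one has $d(c,a)=d(c,b)=10n$, and the divergence hypothesis applied to the pair $(a,b)$ with avoidance point $c$ produces a path $\beta$ of length at most $Dn$ from $a$ to $b$ avoiding the ball $B(c,5n-2)$. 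Define $\alpha_2$ to be the concatenation of the subgeodesic from $1$ to $a$, the path $\beta$, and the subgeodesic from $b$ to $x$. Its length is at most $19n/2+Dn+19n/2=(19+D)n\leq 20Dn$, using $D\geq 1$.

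To check condition (2), fix $r\in\alpha_2\setminus B$ and $q\in\alpha_1\setminus B$. If $r$ lies on $[1,a]$, the collinearity on the bi-infinite geodesic puts $1$ between $r$ and $q$, giving $d(r,q)=d(r,1)+d(1,q)>2s+2s>s$; the case $r\in[b,x]$ is symmetric with $x$ in place of $1$. If $r\in\beta$, then $d(r,c)\geq 5n-2$ and $d(q,c)\leq n/2$, so $d(r,q)\geq 9n/2-2>s$ since $s<n/4$.

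The only nontrivial technical step is the geodesic extension used to produce $a,b$: one needs $[a,1]$, $[1,x]$, $[x,b]$ to fit together as a single geodesic of length $20n$. This should follow from a compactness argument (K\"onig's lemma) in the bounded-degree Cayley graph, together with infiniteness and one-endedness, which prevent the segment $[1,x]$ from being ``dead-ended'' on either side. This is the step I expect to require the most care to formalize; the rest of the argument is essentially bookkeeping with the triangle inequality.
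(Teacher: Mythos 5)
Your construction has a genuine gap at the step you flagged: the extension of $[1,x]$ to a bi-infinite geodesic. This is not merely delicate to formalize --- it is false in general. One-endedness does not rule out dead ends: the lamplighter group $\Z_2\wr\Z$ is one-ended and (with the standard generators) has dead-end elements of unbounded depth, i.e.\ elements $x$ such that every vertex within distance $k$ of $x$ is no farther from $1$ than $x$ is. For such an $x$ the geodesic $[1,x]$ cannot be extended past $x$ by even a single edge while remaining a geodesic from $1$, so the point $b$ with $d(a,x)=d(a,1)+d(1,x)+d(x,b)$ that your argument requires simply does not exist. The same problem can occur at the $1$-end of the segment. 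Everything downstream of this step --- the collinearity computation $d(r,q)=d(r,1)+d(1,q)$ for $r\in[1,a]$, and the choice of $a,b$ with $d(a,b)=20n$ --- depends on it, so the proof does not go through as written.

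The paper's proof avoids exactly this trap. It does not try to extend $[1,x]$; instead it uses the standard fact (homogeneity plus K\"onig's lemma) that there is \emph{some} bi-infinite geodesic $\gamma$ through $1$, with no control on its relation to $[1,x]$, and then proves by a short triangle-inequality argument that at least one of the two rays of $\gamma$ emanating from $1$ satisfies: every point at distance more than $2s$ from $1$ along the ray is at distance more than $s$ from $[1,x]$. (If both rays came back within $s$ of $[1,x]$ at points $w_1,w_2$ with $d(w_i,1)=\ell_i>2s$, then $\ell_1+\ell_2=d(w_1,w_2)\leq 2s+(\ell_2-\ell_1)+2s$ forces $\ell_1\leq 2s$.) This ``divergent ray'' at $1$, together with the analogous ray at $x$, replaces your collinear extension: the separation of the initial and final legs of $\alpha_2$ from $\alpha_1$ outside $N_{2s}(\{1,x\})$ comes from this property rather than from additivity of distances along a single geodesic. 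The rest of your argument (applying the divergence bound to the far endpoints of the two rays, with the avoided ball chosen to contain $N_s([1,x])$ --- the paper centres it at $1$ rather than at the midpoint, but that is immaterial) is essentially the paper's bookkeeping and is fine. So the fix is to replace the extension step by the divergent-ray lemma; as stated, though, the proposal rests on a false claim.
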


\begin{proof}
 We need a simple lemma about the geometry of Cayley graphs of infinite groups first.
 
 {\bf Claim.} For any $s$ the following holds. Let $[p,q]$ be a geodesic in $X$. Then there exists a geodesic ray $\beta$ starting at $p$ so that for each $w\in \beta$ either $d(w,p)\leq 2s$ or $d(w,[p,q])> s$.
 
 \begin{proof}
 There exists a bi-infinite geodesic $\gamma$ through $p$.\footnote{For an outline of a proof of this well-known fact see \cite[Exercise IV.A.12]{dlH00}.} We claim that we can choose $\beta$ to be one of the two rays starting at $p$ and contained in $\gamma$. If not there exist $w_1,w_2\in\gamma$ on opposite sides of $p$ so that $\ell_i=d(w_i,p)> 2s$ but $d(w_i,x_i)\leq s$ for some $x_i\in [p,q]$. Without loss, we assume $\ell_1\leq \ell_2$. Notice that $d(x_1,x_2)=|d(x_2,p)-d(x_1,p)|\leq \ell_2-\ell_1+2s$.
 
 Hence, $\ell_1+\ell_2=d(w_1,w_2)\leq 2s+d(x_1,x_2)\leq 2s+(\ell_2-\ell_1)+2s,$
 from which we deduce $\ell_1\leq 2s$, a contradiction.
 \end{proof}
 
 Let $x \in VX$ satisfy $Div_X(20d(1,x))\leq Dd(1,x)$; let us construct a $(20D,s,2s)$--bigon at $x$. If $d(1,x)\leq 4s$ we can just take $\alpha_1=\alpha_2$ to be any geodesic from $1$ to $x$, so assume that this is not the case. Let $\alpha_1$ be any geodesic from $1$ to $x$. Using the claim, let $\beta,\beta'$ be geodesic rays starting at $1$ and $x$ respectively so that for every $w$ on either $\beta$ or $\beta'$ at distance larger than $2s$ from the starting point we have $d(w,[1,x])>s$. We can form $\alpha_2$ by concatenating
 \begin{itemize}
  \item a sub--geodesic of $\beta$ of length $10d(1,x)$, from $1$ to a vertex $y$,
  \item a path of length at most $Dd(1,x)$ that avoids $N_s([1,x])$ connecting $y$ to a vertex $y'\in\beta'$, and
  \item a sub--geodesic of $\beta'$ from $y'$ to $x$.
 \end{itemize}
 
 More precisely, we take $y'$ at distance $9d(1,x)$ from $x$. To construct the path from the second item we use the divergence bound, applied with $a=y,b=y'$ and $c=1$, to obtain a suitable path avoiding the ball $B$ of radius $\frac{1}{2}d(1, \set{y,y'})-2$, centred at $1$. It suffices to prove that $N_s([1,x])\subseteq B$. To see this, first of all notice that $N_s([1,x])\subseteq B_{d(1,x)+s}(1)$ so that it suffices to prove $d(1,x)+s\leq \frac{1}{2}d(1, \set{y,y'})-2$. We have $d(1,y)=10 d(1,x)$ and $d(1,y')\geq 9d(1,x)-d(1,x)=8d(1,x)$. Hence we need $d(1,x)+s\leq 4d(1,x)-2$, which holds since we are assuming $d(1,x)>4s$ and $s\geq 1$.
  
 The length of $\alpha_2$ is at most $10d(1,x)+9d(1,x)+Dd(1,x) \leq 20D d(1,x)$. Thus, we have constructed a $(20D,s,2s)$--bigon at $x$. 
\end{proof}

We now prove the following proposition, which is a generalisation of Proposition \ref{prop:lindiv}.
\begin{prop}\label{prop:linseqdiv} Let $G$ be a finitely generated group with exponential growth. If there exists a constant $C$ such that $Div(n)\leq Cn$ for infinitely many $n\in \N$, then $G$ has exponentially many fat bigons.
\end{prop}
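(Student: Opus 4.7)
The plan is to apply Proposition \ref{prop:div} at exponentially many points $x$ whose distance from the identity $1$ is comparable to one of the ``good'' scales $n\in I$ at which divergence is linear, where $I$ denotes the infinite set of $n$ with $Div_X(n)\leq Cn$. Observe first that this hypothesis implicitly forces $G$ to be one-ended: in a multi-ended Cayley graph, pairs of points in different ends have infinite divergence, so $Div_X(n)=\infty$ for all sufficiently large $n$. We may therefore invoke Proposition \ref{prop:div}.

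Fix a Cayley graph $X$ of $G$ with basepoint $1$, let $\lambda>1$ be a constant with $|B_r(1)|\geq\lambda^r$ for all sufficiently large $r$ (available by exponential growth), and let $\mu>1$ be a constant with $|B_r(1)|\leq\mu^r$ for all $r\geq 1$ (available from bounded vertex degree). Fix once and for all an integer $K$ large enough that $\lambda^{1/20}>\mu^{1/K}$, and for each $n\in I$ consider the annulus
\[
A_n=\setcon{x\in VX}{\lceil n/K\rceil\leq d(1,x)\leq\lfloor n/20\rfloor}.
\]
Since $Div_X$ is visibly non-decreasing in its argument, every $x\in A_n$ satisfies
\[
Div_X(20\,d(1,x))\leq Div_X(n)\leq Cn\leq CK\,d(1,x).
\]
Applying Proposition \ref{prop:div} with $D=CK$ then produces, for every $s\geq 1$, an $(L,s,2s)$-bigon at every $x\in A_n$, where $L=20CK$ is independent of $s$, $x$, and $n$.

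Finally I count $|A_n|$: the exponential lower and upper bounds on balls combine, by the choice of $K$, to give
\[
|A_n|\geq\lambda^{\lfloor n/20\rfloor}-\mu^{\lceil n/K\rceil}\geq c^{\lfloor n/20\rfloor}
\]
for some $c>1$ and all sufficiently large $n\in I$. Since $A_n\subseteq B_{\lfloor n/20\rfloor}(1)$, this yields $|\mathcal B^X(L,s,2s)\cap B_{r}(1)|\geq c^{r}$ at each value $r=\lfloor n/20\rfloor$ with $n\in I$, and there are infinitely many such $r$. With $s_0=1$, these are exactly the constants required by the definition of having exponentially many fat bigons.

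The only delicate point is the quantitative choice of the inner radius $n/K$: it must be small enough that the annulus $A_n$ still captures the full exponential growth of $G$, which forces $K$ to depend on the ratio $\log\mu/\log\lambda$. Beyond this balancing the argument is a direct packaging of Proposition \ref{prop:div} together with the exponential growth hypothesis.
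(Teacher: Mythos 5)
Your proof is correct and follows essentially the same route as the paper's: both restrict attention to an annulus around a good scale $n\in I$ on which Proposition \ref{prop:div} applies with a uniform constant, and then use exponential growth to show that this annulus contains exponentially many vertices. The only cosmetic difference is in the counting step: you bound $|A_n|$ by subtracting the inner ball (which forces you to tune the inner radius $n/K$ against the growth constants), whereas the paper fixes the annulus between radii $\lfloor n/40\rfloor$ and $\lfloor n/20\rfloor$ and simply observes that it contains a ball of radius $\lfloor n/80\rfloor$.
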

\begin{proof}
If $G$ has exponential growth and $Div(n)\leq Cn$ for all $n$ in an infinite subset $I\subseteq \N$, then there exists some $D\geq 1$ such that $Div(20m)\leq Dm$ holds for all $m$ such that there is an $n\in I$ satisfying $\lfloor \frac{n}{40}\rfloor \leq m \leq \lfloor \frac{n}{20}\rfloor$. Hence, for every $x$ in $B_{\lfloor \frac{n}{20}\rfloor}(1)\setminus B_{\lfloor \frac{n}{40}\rfloor}(1)$ and every $s\geq 1$, there is a $(20D,s,2s)$--bigon at $x$, by Proposition \ref{prop:div}. Thus, for each $n\in I$, $|\mathcal B^G(20D,s,2s)\cap B_n(1)|\geq|B_{\lfloor \frac{n}{80}\rfloor}(1)|$, since $B_{\lfloor \frac{n}{20}\rfloor}(1)\setminus B_{\lfloor \frac{n}{40}\rfloor}(1)$ contains a ball of radius $\lfloor \frac{n}{80}\rfloor$. Since $G$ has exponential growth, $\lim_{n\to\infty} |B_n|^{\frac{1}{n}}=1+\epsilon>1$, so there can only be finitely many $n$ such that $|B_n|\leq (1+\frac{\epsilon}{2})^n$. Hence, $|B_{\lfloor \frac{n}{80}\rfloor}(1)|\geq (1+\frac{\epsilon}{2})^{\lfloor \frac{n}{80}\rfloor}$ for all sufficiently large $n\in I$. Thus, $G$ has exponentially many fat bigons.
\end{proof}

\subsection{More fat bigons}
Relations in a $C'(\frac16)$ small cancellation group define isometrically embedded cyclic subgraphs in the appropriate Cayley graph (cf.~\cite{LS01,Gr03}), so are natural examples of fat bigons. Therefore we obviously have the following:

\begin{prop}\label{prop:small_canc} Let $G$ be a group which admits a $C'(\frac16)$ small cancellation presentation $G=\fpres{S}{R}$, where each $r\in R$ is cyclically reduced and no word in $R$ can be obtained from any other via cyclic conjugation and/or inversion.
If there are constants $d>1$, $C\geq 0$ and an infinite subset $I\subseteq \N$ such that for each $n\in I$, $\abs{\setcon{r\in R}{n-C\leq \abs{r}\leq n}}>d^n$, 
then $X=Cay(G,S)$ admits exponentially many fat bigons.
\end{prop}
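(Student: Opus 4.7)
The plan is to realize the isometrically embedded cycles coming from relators directly as fat bigons, as suggested by the discussion preceding the statement. For each $r\in R$ of length $m$, the closed path labelled by $r$ based at $1$ embeds a cycle $C_r$ of length $m$ isometrically into $X$ (by Greendlinger's Lemma in the $C'(1/6)$ setting). I shall attach to $r$ a vertex $\phi(r)\in C_r$ and use the two arcs of $C_r$ from $1$ to $\phi(r)$ as the sides of a bigon, then count.

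For a parameter $k\in[m/6,m/2]$, set $\phi(r)=s_1\cdots s_k$ when $r=s_1 s_2\cdots s_m$. Since $C_r$ is isometrically embedded and $k\leq m/2$ we have $d(1,\phi(r))=k$, and the two arcs $\alpha_1,\alpha_2$ have lengths $k$ and $m-k\leq 5k$, so the length axiom of a bigon holds with $L=5$. For the fatness axiom, label by $v_i$ the vertex of $\alpha_1$ at cyclic distance $i$ from $1$, and by $w_j$ the vertex of $\alpha_2$ at cyclic distance $j$ from $1$ (along the other arc). Isometric embeddedness gives $d_X(v_i,w_j)=\min(i+j,\,m-i-j)$. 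If both vertices lie outside $N_C(\{1,\phi(r)\})$, then each of $i$, $k-i$, $j$, $m-k-j$ is at least $C+1$, so this minimum is at least $2C+2$. Hence for any $s\geq 2$, the bigon is $(5,s,C)$--fat with the choice $C=\lceil s/2\rceil$.

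The delicate step is showing that distinct relators give rise to distinct vertices $\phi(r)$, thereby converting the count of relators into a count of bigons. To arrange this uniformly, fix $k=k_n:=\lceil n/6\rceil+1$ (depending only on $n$, not on $r$) and restrict to the family of relators with $|r|\in[n-C_0,n]$ provided by the hypothesis, where I write $C_0$ for the constant denoted $C$ in the statement, to avoid clashing with the bigon parameter $C$. For $n$ large, $k_n\in[|r|/6,|r|/2]$ for every such $r$. If two distinct $r_1,r_2$ in this family had $\phi(r_1)=\phi(r_2)$, their common prefix of length $k_n$ would be a piece---the hypothesis that no element of $R$ is a cyclic conjugate or inverse of another guarantees $r_1,r_2$ are distinct as cyclic words, so their common initial segment really is a piece---but $k_n>n/6\geq|r_i|/6$, contradicting $C'(1/6)$.

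Combining the two steps, for each sufficiently large $n\in I$ we obtain at least $d^n$ distinct vertices in $B_{k_n}(1)$ each carrying a $(5,s,\lceil s/2\rceil)$--fat bigon. Writing $N=k_n$ we have $n\geq 6N-O(1)$, so the count is at least $d^{6N-O(1)}\geq c^N$ for any $1<c<d^6$ and all large $N$ in the infinite set $\{k_n:n\in I\}$, which yields exponentially many fat bigons. I expect the main obstacle to be the injectivity step: one has to pick $k$ as a function of $n$ rather than of $|r|$, with just enough slack above $n/6$ for $C'(1/6)$ to apply uniformly across the admissible range of relator lengths, while keeping $k$ comfortably below $|r|/2$ so that the two arcs genuinely form a bigon of bounded length ratio. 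Everything else is routine counting together with isometric embeddedness of the cycles.
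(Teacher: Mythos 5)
Your overall strategy is exactly the one the paper gestures at (the paper gives no actual proof, only the remark that relators give isometrically embedded cycles and hence ``natural examples of fat bigons''), and your verification of the length and fatness conditions from isometric embeddedness, as well as the final change-of-variable counting, are fine. The problem is in the step you yourself flag as delicate: the injectivity of $r\mapsto\phi(r)$.

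You define $\phi(r)$ to be the \emph{group element} (vertex of $X$) represented by the length-$k_n$ prefix of $r$, and then argue that $\phi(r_1)=\phi(r_2)$ forces the two prefixes to be a ``common prefix'', hence a piece of length $>|r_i|/6$. But equality of $\phi(r_1)$ and $\phi(r_2)$ as vertices only says the two prefixes $u_1,u_2$ represent the same element of $G$; it does not say they are equal as words, and only in the latter case do you get a common subword of $r_1$ and $r_2$ to which the $C'(\frac16)$ condition applies. A priori $u_1$ and $u_2$ could be two distinct geodesic words with the same endpoints (distinct geodesics between a pair of points are entirely possible in infinitely presented $C'(\frac16)$ groups), in which case your piece argument says nothing. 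The conclusion you want is in fact true, but it needs an extra argument: consider a reduced van Kampen diagram for the geodesic bigon $u_1u_2^{-1}$. By Greendlinger's lemma (or Strebel's classification of geodesic bigons over $C'(\frac16)$ presentations), if the diagram had a cell $\Pi$, then $\partial\Pi$ would have to share arcs of length greater than $|\partial\Pi|/6$ with each of the two sides; the arc on $u_i$ is a common subword of the label of $\Pi$ and of $r_i$, so the label of $\Pi$ would have to agree, up to cyclic conjugation and inversion, with \emph{both} $r_1$ and $r_2$, contradicting the hypothesis that no relator is obtained from another by these operations. Hence the diagram has no cells, $u_1=u_2$ as words, and only then does your piece argument apply. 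Without some such analysis the counting does not go through, since exponentially many relators could in principle collapse onto subexponentially many vertices.

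Two smaller points: your claim that each of $i$, $k-i$, $j$, $m-k-j$ exceeds $C$ is not literally what ``lying outside $N_C(\{1,\phi(r)\})$'' gives when $j>m/2$ or $m-k-j>k+j$, but in those regimes $i+j$ or $m-i-j$ is of order $m$ and the required lower bound $\min(i+j,m-i-j)>s$ holds anyway for $n$ large, so this is harmless. Also note that the bigons with $d(1,\phi(r))$ small are handled vacuously (both arcs lie in $B$), so no separate treatment is needed there.
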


One way to build such a collection of relations is as follows. Set $S=\set{a,b,c}$. For each non-trivial word $w=F(a,b)$, define $r_w=cwc^2w\dots c^{24}w$. The longest piece is of the form $c^{22}wc^{23}$ which has length $|w|+45$, while $|r_w|=\binom{24}{2}+24|w|=300+24|w| > 6(45+|w|)$. The collection $R=\setcon{r_w}{w\in F(a,b)}$ satisfies the hypotheses of the above proposition with $d=3$, $C=0$ and $I=\setcon{24n+300}{n\in\N}$. If desired, we can ensure the group we construct is lacunary hyperbolic by instead taking $R=\setcon{r_w}{w\in F(a,b),\ \abs{w}\in I}$ for some suitably sparse infinite subset $I\subseteq \N$. By excluding any relators $r_w$ where $w$ is a proper power, we can ensure that every non--trivial element of $G$ is stable \cite{ACGH2} yielding examples of small cancellation groups which cannot be subgroups of hyperbolic groups and where this statement does not follow from \cite{AouDurTay}.

\end{document}